\pgfplotsset{compat=1.17}
\algrenewcommand\algorithmicrequire{\textbf{Input:}}
\algrenewcommand\algorithmicensure{\textbf{Output:}}
\tikzstyle{vertex} = [black, fill, circle, text width=2mm, inner sep=0pt]
\definecolor{chroma4blue}{RGB}{63,97,135}
\definecolor{chroma4green}{RGB}{65,143,126}
\definecolor{chroma4sand}{RGB}{171,166,125}
\definecolor{chroma4gray}{RGB}{199,199,199}
\active\gdef@{\mkern1mu}}
\DeclareMathOperator{\diag}{diag}
\active\gdef@{\mkern1mu}}
\def\moverlay{\mathpalette\mov@rlay}
\def\mov@rlay#1#2{\leavevmode\vtop{%
   \baselineskip\z@skip \lineskiplimit-\maxdimen
   \ialign{\hfil$\m@th#1##$\hfil\cr#2\crcr}}}
\newcommand{\charfusion}[3][\mathord]{
    #1{\ifx#1\mathop\vphantom{#2}\fi
        \mathpalette\mov@rlay{#2\cr#3}
      }
    \ifx#1\mathop\expandafter\displaylimits\fi}
\theoremstyle{definition}
\newtheorem{lemma}{Lemma}
\newtheorem*{example*}{Example}
\newtheorem{corollary}[lemma]{Corollary}
\newtheorem{theorem}[lemma]{Theorem}
\newtheorem{definition}[lemma]{Definition}
\numberwithin{lemma}{section}
\numberwithin{fact}{section}
\numberwithin{equation}{section}
\newcommand{\pushright}[1]{\ifmeasuring@#1\else\omit\hfill$\displaystyle#1$\fi\ignorespaces}
\newcommand{\pushleft}[1]{\ifmeasuring@#1\else\omit$\displaystyle#1$\hfill\fi\ignorespaces}
\newlength{\negph@wd}
\DeclareRobustCommand{\negphantom}[1]{%
  \ifmmode
    \mathpalette\negph@math{#1}%
  \else
    \negph@do{#1}%
  \fi
}
\newcommand{\negph@math}[2]{\negph@do{$\m@th#1#2$}}
\newcommand{\negph@do}[1]{%
  \settowidth{\negph@wd}{#1}%
  \hspace*{-\negph@wd}%
}
\title{\Large \textbf{Fast ultrametric matrix-vector multiplication}}
\author{\normalsize Tobias Hofmann\textsuperscript{1}, Andy Oertel\textsuperscript{2}}
\date{}
\affil{\footnotesize
Chemnitz University of Technology\\ \textsuperscript{1}\texttt{tobias.hofmann@math.tu-chemnitz.de},\\ Lund University\\ \textsuperscript{2}\texttt{andy.oertel@cs.lth.se}
}
\begin{document}
\maketitle

\begin{abstract}
\noindent
\textbf{Abstract.} We study the properties of ultrametric matrices aiming to design methods for fast ultrametric matrix-vector multiplication. We show how to encode such a matrix as a tree structure in quadratic time and demonstrate how to use the resulting representation to perform matrix-vector multiplications in linear time. Accompanying this article, we provide an implementation of the proposed algorithms and present empirical results on their practical performance. \\

\noindent
\textbf{Keywords.} ultrametric matrices, tree representations, fast matrix-vector multiplication\\

\noindent
\textbf{MSC Subject classification.} 05-08, 15-04, 15B99, 68R10, 05C50
\end{abstract}

\section{Introduction} Ultrametricity is a remarkable, occasionally a little counterintuitive, but often natural and interesting property. Examples in which ultrametric distances arise range from the $p$-adic number system to phylogenetic trees, which is illustrated nicely by Holly~\cite{holly2001pictures}. Accordingly, ultrametric matrices appear in various mathematical fields. The monograph of Dellacherie, Mart{\'\i}nez, and Mart{\'\i}n~\cite{dellacherie2014inverse} describes how ultrametric matrices are related to M-matrices and underlines their relevance in discrete potential theory or the analysis of Markov chains. Another remarkable property, established by Mart{\'\i}nez, Michon, and San Mart{\'\i}n in~\cite{martinez1994inverse}, is that ultrametric matrices are nonsingular and their inverses are strictly diagonally dominant Stieltjes matrices. We learned about their rich properties while investigating edge-connectivity matrices, whose off-diagonal entries satisfy an ultrametric inequality. This is a classical result of Gomory and Hu~\cite{gomoryhu1961multi}, which links ultrametricity with topics from combinatorics and spectral graph theory, as is discussed in Hofmann and Schwerdtfeger~\cite{hofmann2021edge}. Furthermore, ultrametric matrices play a role in statistics and data analysis. Chehreghani~\cite{chehreghani2020unsupervised} develops a machine learning framework that builds on minimax, and herewith ultrametric, distance measures. Lauritzen, Uhler, and Zwiernik~\cite{lauritzen2019maximum} show that ultrametric matrices are relevant in maximum likelihood estimation problems for specific Gaußian distributions. Another example is an ultrametric spectral clustering approach developed by Little, Maggioni, and Murphy~\cite{little2020path}.

As interest in applications involving ultrametric matrices grows, the question of how to perform efficient ultrametric matrix computations arises. This is the focus of this article. Building on the well-known fact that ultrametric matrices are completely reducible, our main contributions are explicit algorithmic ideas how to encode an ultrametric matrix as its associated tree structure and how to use this data structure to perform fast ultrametric matrix-vector multiplications.

{\bf Outline.} We review basic facts about ultrametric matrices and point out how these matrices are related to tree structures in Section~\ref{sec:properties}. Section~\ref{sec:multiplication} is about utilizing these data structures to perform fast matrix-vector multiplications. In Section~\ref{sec:empirical}, we summarize results about the performance of the methods we propose. Accompanying our computational insights, we provide an implementation of our algorithms. \\

We conclude this section with certain concepts and notations that are particularly important for our investigation. We use~$\mathds{1}$ to denote the all ones column vector of appropriate dimensions. The symbol~$e_i$ represents the standard column basis vector of appropriate dimensions, whose entries are defined via~$(e_i)_j\coloneqq 1$ if~$i=j$ and~$(e_i)_j\coloneqq 0$ if~$i\neq j$. For a matrix~$A\in\mathbb{R}^{n\times n}$, we use index sets~$I,J\subset \{1,\ldots,n\}$ to specify~$A_{I J}$ as the submatrix that contains those rows of~$A$ that belong to the indices in~$I$ and those columns of~$A$ that belong to indices in~$J$. If~$I=J$, we may use the shorthand~$A_I$ instead of~$A_{I I}=A_{I J}$. We denote diagonal matrices whose entries are given by a sequence~$(a_i)_{i=1}^n$ by~$\diag(a_i : i=1,\ldots,n)$. For graph theoretical terminology, we refer to the monograph of Diestel~\cite{diestel2017graph}.

\section{Basic Properties of ultrametric matrices}\label{sec:properties}

The investigation of ultrametric matrices gained in importance with the article by Mart{\'\i}nez, Michon, and San Mart{\'\i}n~\cite{martinez1994inverse} who give in essence the following definition.\newpage
\begin{definition} \label{def:ultrametric}
A nonnegative symmetric matrix~$A = [a_{ij}]\in\mathbb{R}^{n\times n}$ is said to be \emph{ultrametric} if it satisfies the inequalities
\begin{itemize}
    \item[(a)] $a_{ij}\ge\min\{a_{ik},a_{kj}\}$\negphantom{$a_{ij}\ge\min\{a_{ik},a_{kj}\}$}\phantom{$a_{ii}\ge\max\{a_{ij}:j\in\{1,\ldots,n\}\setminus\{i\}\}$}\quad for all~$i\neq j\neq k\neq i$,
    \item[(b)] $a_{ii}\ge\max\{a_{ij}:j\in\{1,\ldots,n\}\setminus\{i\}\}$\quad for all~$i$.
\end{itemize}
\end{definition}
The inequalities in~(a) are known as \emph{ultrametric inequalities} and a matrix that satisfies~(b) is referred to as \emph{column pointwise diagonal dominant}. If~$A$ satisfies~(a), but not necessarily~(b), we call~$A$ \emph{essentially} ultrametric. If~$A$ satisfies the inequalities in~(b) with equality, we call~$A$ \emph{special} ultrametric, and if~$A$ satisfies the inequalities in~(b) strictly, we call~$A$ \emph{strictly} ultrametric. A matrix of size~$n=1$ is strictly ultrametric only if its entry is positive, whereas there is no such convention for special or essentially ultrametric matrices.

The focus in the article of Mart{\'\i}nez, Michon, and San Mart{\'\i}n~\cite{martinez1994inverse} is on strictly ultrametric matrices, whereas Fiedler~\cite{fiedler2000special} studied special ultrametric matrices, which can be seen as extremal matrices in the boundary of the set of ultrametric matrices. The term essentially ultrametric is to emphasize situations in which specific diagonal entries are not of interest. For example, this is the case for the edge-connectivity matrices in~\cite{hofmann2021edge}. A central property of strictly ultrametric matrices is that they are nonsingular and their inverses are diagonally dominant M-matrices. Mart{\'\i}nez, Michon, and San Mart{\'\i}n prove this fact in~\cite{martinez1994inverse} by probabilistic arguments. A linear algebra proof is given by Nabben and Varga~\cite{nabben1994linear}. Their arguments essentially rely on the fact that ultrametric matrices are \emph{completely reducible}, which is what they state in the following way.
\begin{theorem}\label{thm:nabben} Let~$A=[a_{ij}]$ be a nonnegative symmetric matrix in~$\mathbb{R}^{n\times n}$. If~$n>1$, then~$A$ is essentially ultrametric if and only if there is an integer~$k$ with~$1\le k < n$ and a suitable permutation matrix~$P\in\mathbb{R}^{n\times n}$ such that
\begin{equation*}
    P \big(A-\min\{a_{ij}:i\neq j\}@@\mathds{1}\mathds{1}^\top\big) P^\top = \begin{bmatrix}
    B & 0 \\
    0 & C
    \end{bmatrix},
\end{equation*}
where~$B$ and~$C$ are essentially ultrametric matrices in~$\mathbb{R}^{k\times k}$ and~$\mathbb{R}^{(n-k)\times(n-k)}$, respectively.
\end{theorem}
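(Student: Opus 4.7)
The plan is to prove both implications separately, with the reverse direction providing the bulk of the work.

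For the \emph{if} direction, I would take $P = I$ without loss of generality and set $I_1 = \{1,\ldots,k\}$, $I_2 = \{k+1,\ldots,n\}$, $m := \min\{a_{ij} : i \ne j\}$, so that the assumption reads $A = m\mathds{1}\mathds{1}^\top + \operatorname{diag}(B, C)$. To verify property~(a) for $A$, I would fix three distinct indices $i, j, k$ and split on how they distribute across the two blocks: if all three lie in one block the inequality reduces to the corresponding one for $B$ or $C$, and otherwise some pair is split across blocks, which forces the associated entry of $A$ to equal $m$ while every off-diagonal entry of $A$ is still at least $m$, making the ultrametric inequality $a_{ij} \ge \min(a_{ik}, a_{kj})$ trivial.

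For the \emph{only if} direction, the central construction is a relation on $\{1, \ldots, n\}$ defined by $i \sim j$ iff $i = j$ or $a_{ij} > m$. Reflexivity and symmetry are clear; transitivity is exactly where property~(a) enters, since for distinct $i, j, k$ with $a_{ij}, a_{jk} > m$ the ultrametric inequality yields $a_{ik} \ge \min(a_{ij}, a_{jk}) > m$. Because $m$ is attained by some off-diagonal pair, $\sim$ has at least two equivalence classes. Choosing $I_1$ to be one of them and $I_2$ its complement, every cross-entry $a_{ij}$ with $i \in I_1$, $j \in I_2$ satisfies both $a_{ij} \le m$ (otherwise $i \sim j$) and $a_{ij} \ge m$, and therefore equals $m$. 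Letting $P$ be the permutation that places $I_1$ before $I_2$, this makes $P(A - m\mathds{1}\mathds{1}^\top)P^\top$ block-diagonal with blocks $B = A_{I_1} - m\mathds{1}\mathds{1}^\top$ and $C = A_{I_2} - m\mathds{1}\mathds{1}^\top$, and property~(a) together with off-diagonal nonnegativity transfer to these blocks without difficulty.

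The main obstacle I foresee is verifying that $B$ and $C$ qualify as essentially ultrametric in the sense of Definition~\ref{def:ultrametric}, which by convention demands a globally nonnegative matrix. Off-diagonal nonnegativity and property~(a) are both immediate, but the diagonal entries $B_{ii} = a_{ii} - m$ need not be nonnegative from property~(a) alone: one requires the additional bound $a_{ii} \ge m$, which is weaker than~(b) yet not implied by~(a). I would want to reconcile this with the conventions of Nabben and Varga---most likely via a mild tacit hypothesis on essentially ultrametric matrices---before declaring the argument complete.
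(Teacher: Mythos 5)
Your argument is correct, and it is worth pointing out that the paper itself does not prove Theorem~\ref{thm:nabben} at all: it imports the statement from Nabben and Varga~\cite{nabben1994linear} (with the variant of Fiedler~\cite{fiedler2000special}), remarking only that the original proof for strictly ultrametric matrices never uses the diagonal entries. So your proposal supplies a self-contained elementary proof where the paper relies on a citation. The content is the standard complete-reducibility argument, and both directions check out: for the \emph{if} direction, reducing to $P=I$ is harmless since inequality~(a) and the value $m=\min\{a_{ij}:i\neq j\}$ are invariant under simultaneous permutation, and your case split works because any triple of distinct indices not contained in a single block must have at least one of the pairs $(i,k)$, $(k,j)$ straddling the blocks, whence $\min(a_{ik},a_{kj})=m\le a_{ij}$, the latter bound holding by the very definition of $m$; for the \emph{only if} direction, the relation $i\sim j$ iff $i=j$ or $a_{ij}>m$ is the right device: transitivity is precisely inequality~(a), attainment of $m$ gives at least two classes, and splitting one class against its complement forces every cross entry to equal $m$, while (a) and off-diagonal nonnegativity pass to $A_{I_1}-m\mathds{1}\mathds{1}^\top$ and $A_{I_2}-m\mathds{1}\mathds{1}^\top$ because subtracting a constant preserves both.

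Your closing worry about the diagonal entries $a_{ii}-m$ of $B$ and $C$ possibly being negative is a fair reading of Definition~\ref{def:ultrametric}, which nominally asks for a nonnegative matrix, but it is exactly the point the paper disposes of in the sentence following the theorem: the authors \enquote{simply ignore the diagonal entries of $A$ and accordingly claim nothing about the diagonal entries of $B$ and $C$}, i.e., \enquote{essentially ultrametric} is used there in a purely off-diagonal sense. With that convention made explicit, no mathematical step is missing from your proof.
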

Note that in~\cite{nabben1994linear} the above statement is formulated for a strictly ultrametric matrix~$A$. In this case, the matrices~$B$ and~$C$ follow to be strictly ultrametric as well. However, the idea of the proof presented in~\cite{nabben1994linear} actually does not require any particular diagonal entries. Fiedler~\cite{fiedler2000special}, for example, follows the same line of reasoning to obtain Theorem~\ref{thm:nabben} except that~$A$, $B$, and~$C$ are special ultrametric. In our statement above, we simply ignore the diagonal entries of~$A$ and accordingly claim nothing about the diagonal entries of~$B$ and~$C$. Also note that whereas Theorem~\ref{thm:nabben} only states the existence of a suitable integer~$k$ and a permutation matrix~$P$, the focus of this article is on algorithms to determine~$P$ explicitly. The following simple but useful observation is our first step in that direction.
\begin{lemma}\label{lem:min_on_row}
In each row and column of an essentially ultrametric matrix~$A=[a_{ij}]$ there is an entry equal to~$\min\{a_{ij}:i\neq j\}$.
\end{lemma}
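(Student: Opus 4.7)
The plan is to exploit symmetry of $A$ (so it suffices to prove the claim for rows) together with the ultrametric inequality from Definition~\ref{def:ultrametric}(a). Let $m\coloneqq\min\{a_{ij}:i\neq j\}$ and pick any pair of indices $p\neq q$ with $a_{pq}=m$; this witness pair exists by definition of the minimum.

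Fix an arbitrary row index $i$. The argument splits into two easy cases. First, if $i\in\{p,q\}$ then the entry $a_{pq}=m$ (or its symmetric counterpart $a_{qp}=m$) already lies in row $i$, so there is nothing to show. Second, if $i\notin\{p,q\}$, then $p$, $q$, $i$ are pairwise distinct and the ultrametric inequality in Definition~\ref{def:ultrametric}(a) applies to give
\begin{equation*}
    m \;=\; a_{pq} \;\ge\; \min\{a_{pi},a_{iq}\}.
\end{equation*}
On the other hand, both $a_{pi}$ and $a_{iq}$ are off-diagonal entries of $A$, so each is at least $m$ by definition of $m$. Combining both bounds forces $\min\{a_{pi},a_{iq}\}=m$, so at least one of $a_{pi}$, $a_{iq}$ equals $m$. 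Since $A$ is symmetric we have $a_{pi}=a_{ip}$, hence the value $m$ occurs in row $i$ either at column $p$ or at column $q$.

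This handles every row. For columns the claim follows immediately from the symmetry $a_{ji}=a_{ij}$, which turns the row statement into the column statement. I do not anticipate any real obstacle here; the only subtlety worth mentioning is that the hypothesis of column pointwise diagonal dominance (Definition~\ref{def:ultrametric}(b)) is not used, which is consistent with the lemma being stated for essentially ultrametric matrices.
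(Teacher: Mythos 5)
Your proof is correct, but it takes a genuinely different route from the paper. The paper deduces the lemma from Theorem~\ref{thm:nabben}: after subtracting $m\coloneqq\min\{a_{ij}:i\neq j\}$ times $\mathds{1}\mathds{1}^\top$, the matrix is permutation-similar to a block-diagonal matrix, so every row and column of $A-m\mathds{1}\mathds{1}^\top$ contains a zero, which is exactly the claim. You instead argue directly from the ultrametric inequality: fixing a witness pair $p\neq q$ with $a_{pq}=m$ and any row index $i\notin\{p,q\}$, the inequality $m=a_{pq}\ge\min\{a_{pi},a_{iq}\}$ together with the trivial lower bound $a_{pi},a_{iq}\ge m$ forces one of these two off-diagonal entries to equal $m$, and symmetry places it in row $i$; the cases $i\in\{p,q\}$ and the column statement are immediate. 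Your argument is more elementary and self-contained --- it does not invoke the (nontrivial) complete-reducibility theorem at all, and it even locates the minimal entry of row $i$ in column $p$ or $q$ --- whereas the paper's proof is a two-line consequence of a structure theorem it needs anyway for Theorem~\ref{thm:correct_tree_gen}. Your closing remark that condition (b) of Definition~\ref{def:ultrametric} is never used is accurate and consistent with the lemma being stated for essentially ultrametric matrices; the only implicit hypothesis in either proof is $n\ge 2$, without which the minimum over off-diagonal entries is not defined.
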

\begin{proof}
Theorem~\ref{thm:nabben} tells us that there is an entry equal to zero in each row and column of~$P \big(A-\min\{a_{ij}:i\neq j\}@@\mathds{1}\mathds{1}^\top\big) P^\top$, where~$P$ is some permutation matrix. Permuting rows and columns, however, preserves this property. So there is an entry equal to zero in each row and column of~$A-\min\{a_{ij}:i\neq j\}@@\mathds{1}\mathds{1}^\top$. In other words, there is an entry equal to~$\min\{a_{ij}:i\neq j\}$ in each row and column of~$A$.
\end{proof}
Theorem~\ref{thm:nabben} essentially is a decomposition statement showing that there is a tree structure inherent in an ultrametric matrix. Lemma~\ref{lem:min_on_row} emphasizes the fact that we can find the global minimum of the off-diagonal entries of an ultrametric matrix in each of its rows or columns. This is the reason why we may process such a matrix row by row when asking for its underlying tree structure.
\hypertarget{tree_link}{\begin{algorithm}[ht]
\caption{Ultrametric Tree Construction}\label{algo:tree_gen}
\begin{algorithmic}[1]
\Require{essentially ultrametric matrix~$A=[a_{ij}]\in\mathbb{R}^{n\times n}$}
\Ensure{ultrametric tree~$(V,E)$ associated with~$A$}
\Statex
\State $V\gets \{r\}$\label{line:initV}
\State $E\negphantom{E}\phantom{V}\gets \emptyset$\label{line:initE}
\State $I(r) \gets \{1,\ldots,n\}$ \label{line:initI}
\State \Call{TreeRecursion}{$r$}
\Statex
\Procedure{TreeRecursion}{$u$}
\State $i \gets \min (I(u))$
\If{$|I(u)|=1$}\label{line:termination}
\State $f(u) \gets a_{ii}$ \label{line:leaf}
\Else
\State $f(u) \gets \min \{ a_{ij}:j \in I(u)\setminus\{i\}\}$ \label{line:inner}
\State $V\gets V\cup\{v,w\}$
\State $E\negphantom{E}\phantom{V}\gets E\negphantom{E}\phantom{V}\cup\{(u, v), (u, w)\}$
\State $I(v)\phantom{f(u)}\negphantom{I(v)} \gets \{ j \in I(u) : a_{ij} > f(u) \}\cup\{i\}$ \label{line:bigger}
\State $I(w)\phantom{f(u)}\negphantom{I(w)} \gets \{ j \in I(u)\setminus\{i\} : a_{ij} = f(u) \}$
\State \Call{TreeRecursion}{$v$}
\State \Call{TreeRecursion}{$w$}
\EndIf
\EndProcedure
\end{algorithmic}
\end{algorithm}}
For the explicit computation of a tree~$(V,E)$ associated with an ultrametric matrix, we propose Algorithm~\ref{algo:tree_gen}. Here, an edge~$ij\in E$ is to be understood as directed and we address~$i$ as \emph{parent} and~$j$ is its~\emph{child}. Furthermore, each vertex~$u \in V$ takes an index set~$I(u)$ and a value~$f(u)$.
For an example of how Algorithm 1 works, we may take a look at Figure~\ref{fig:tree_example}. It shows an essentially ultrametric matrix~$A$ and the tree that results when applying Algorithm~\ref{algo:tree_gen} to it. It is indeed possible to go on pruning the resulting tree while retaining all the information about the matrix $A$ by contracting a vertex~$v$ and its parent~$u$ if~$f(u)=f(v)$. This may be useful in some situations and is an option our implementation supports. In general, however, pruning may not be possible at all and as it would otherwise overcomplicate our notation, we consider unpruned trees when analyzing the characteristics of Algorithm~\ref{algo:tree_gen}.
\begin{figure}
\begin{center}
\begin{tikzpicture}

\node at (-3.5,1.5) {$A = \begin{bmatrix} 
0 & 1 & 3 & 1 \\
1 & 3 & 1 & 2 \\
3 & 1 & 5 & 1 \\
1 & 2 & 1 & 1 
\end{bmatrix}$};

\footnotesize

\node[thick, draw=chroma4green, inner sep=1.5mm, circle] (1) at (4,3.3) {};
\node at (1) {$r$};
\node[thick, draw=chroma4green, inner sep=1.5mm, circle] (2) at (1.25,1.5) {};
\node at (2) {$u$};
\node[thick, draw=chroma4green, inner sep=1.5mm, circle] (3) at (6.75,1.5) {};
\node at (3) {$a$};
\node[thick, draw=chroma4green, inner sep=1.5mm, circle] (4) at (0,0) {};
\node at (4) {$v$};
\node[thick, draw=chroma4green, inner sep=1.5mm, circle] (5) at (2.5,0) {};
\node at (5) {$w$};
\node[thick, draw=chroma4green, inner sep=1.5mm, circle] (6) at (5.5,0) {};
\node at (6) {$b$};
\node[thick, draw=chroma4green, inner sep=1.5mm, circle] (7) at (8,0) {};
\node at (7) {$c$};

\node[above=-1.25mm] at (1) {$I(r)\!=\!\{1,2,3,4\}\hspace{7mm}f(r)\!=\!1\negphantom{f(r)\!=\!1}\phantom{I(r)\!=\!\{1,2,3,4\}}$~};

\node[above=-1.25mm] at (2) {$I(u)\!=\!\{1,3\}\hspace{7mm}f(u)\!=\!3\negphantom{f(u)\!=\!3}\phantom{I(u)\!=\!\{1,3\}}$};

\node[above=-1.25mm] at (3) {$I(a)\!=\!\{2,4\}\hspace{7mm}f(a)\!=\!2\negphantom{f(a)\!=\!2}\phantom{I(a)\!=\!\{2,4\}}$};

\node[below=1.5mm] at (4) {$\begin{aligned}
     I(v)\!&=\!\{1\}\\[-1mm]
     f(v)\!&=\!0
  \end{aligned}$};

\node[below=1.5mm] at (5) {$\begin{aligned}
     I(w)\!&=\!\{3\}\\[-1mm]
     f(w)\!&=\!5
  \end{aligned}$};

\node[below=1.5mm] at (6) {$\begin{aligned}
     I(b)\!&=\!\{2\}\\[-1mm]
     f(b)\!&=\!3
  \end{aligned}$};

\node[below=1.5mm] at (7) {$\begin{aligned}
     I(c)\!&=\!\{4\}\\[-1mm]
     f(c)\!&=\!1
  \end{aligned}$};

\draw[->, >=latex, thick, black!75!white, rounded corners=1.5mm] (1) to (1.25,2.75) to (2);
\draw[->, >=latex, thick, black!75!white, rounded corners=1.5mm] (1) to (6.75,2.75) to (3);
\draw[->, >=latex, thick, black!75!white, rounded corners=1.5mm] (2) to (0,1.25) to (4);
\draw[->, >=latex, thick, black!75!white, rounded corners=1.5mm] (2) to (2.5,1.25) to (5);
\draw[->, >=latex, thick, black!75!white, rounded corners=1.5mm] (3) to (5.5,1.25) to (6);
\draw[->, >=latex, thick, black!75!white, rounded corners=1.5mm] (3) to (8,1.25) to (7);
\end{tikzpicture}
\end{center}
\caption{An essentially ultrametric matrix~$A$ and its associated tree constructed by Algorithm~\ref{algo:tree_gen}}\label{fig:tree_example}
\end{figure}
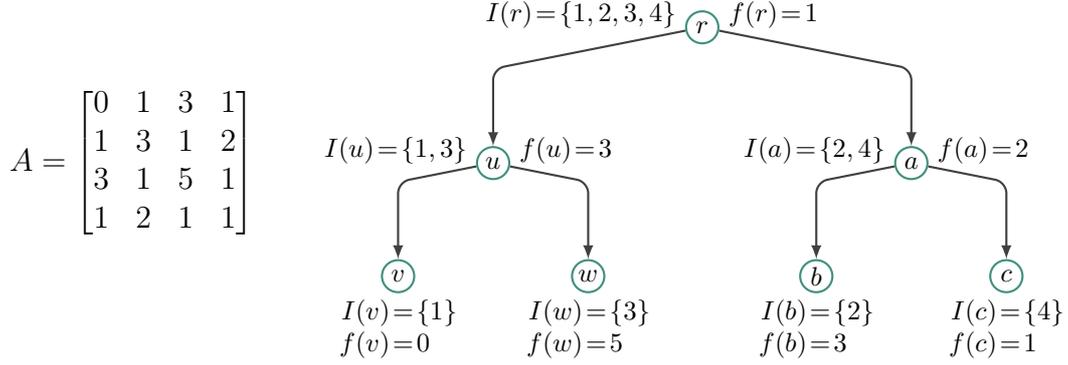

\begin{theorem}\label{thm:correct_tree_gen}
Algorithm~\ref{algo:tree_gen} that has been given an essentially ultrametric matrix~\mbox{$A=[a_{ij}]\in\mathbb{R}^{n\times n}$} as input terminates after $2n-1$ recursion calls and its output is a rooted directed tree~$(V,E)$ in which each vertex can be reached from the root~$r$ by a unique directed path. Moreover, the tree~$(V,E)$ has the following properties.
\begin{enumerate}
    \item A submatrix~$A_{I(u)}$ is essentially ultrametric for each~$u\in V$.\label{thm:correct_tree_gen1}
    \item For each~$i\in\{1,\ldots,n\}$, there is a leaf~$u\in V$ with~$I(u)=\{i\}$ and~$f(u)=a_{ii}$.\label{thm:correct_tree_gen2}
    \item If $u$ has a child $v$, then $f(u)=a_{ij}$ for all~$i\in I(v)$ and all~$j\in I(u)\setminus I(v)$.\label{thm:correct_tree_gen3}
\end{enumerate}
\end{theorem}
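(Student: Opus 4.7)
My plan is to prove all assertions simultaneously by strong induction on $m = |I(u)|$, viewing each invocation of \textsc{TreeRecursion}$(u)$ as the root of an independent subtree built on~$I(u)$, and carrying along the invariant that~$A_{I(u)}$ is essentially ultrametric whenever the procedure is called. The invariant holds at the root~$r$ by hypothesis, and it propagates to any child~$v$ because~$A_{I(v)}$ is a principal submatrix of~$A_{I(u)}$ --- the ultrametric inequalities of Definition~\ref{def:ultrametric}(a) are clearly inherited under restriction to principal submatrices. With this setup in place, claim~(i) is automatic.

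The base case~$m = 1$ is handled directly by the termination branch: the vertex becomes a leaf with~$f(u) = a_{ii}$ where~$I(u) = \{i\}$, contributing exactly one recursion call in agreement with~$2\cdot 1 - 1 = 1$. For the inductive step~$m \geq 2$, I first verify that~$I(v)$ and~$I(w)$ partition~$I(u)$ into two nonempty proper subsets: $I(v)$ is nonempty since it contains the algorithm's chosen index~$i$, and~$I(w)$ is nonempty because~$f(u)$, being the minimum of a finite set, is attained. Disjointness and union of~$I(v)$ and~$I(w)$ are immediate from their defining conditions. Since~$|I(v)|, |I(w)| < m$, the induction applies to both children, yielding termination and the count~$1 + (2|I(v)| - 1) + (2|I(w)| - 1) = 2m - 1$. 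The directed tree structure and uniqueness of root-to-vertex paths follow because each recursion attaches two fresh vertices to~$u$ and the subsequent subtrees operate on disjoint index sets, while claim~(ii) arises by collecting the singleton leaves over~$I(u)$.

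The technical heart of the proof is claim~(iii). For the child~$v$, I would show that~$a_{pq} = f(u)$ holds for every~$p \in I(v)$ and~$q \in I(u) \setminus I(v) = I(w)$; the symmetric statement for the child~$w$ follows by exchanging the roles of~$v$ and~$w$. When~$p = i$, the equality~$a_{iq} = f(u)$ is precisely the defining condition of~$I(w)$. When~$p \in I(v) \setminus \{i\}$, I apply the ultrametric inequality to the triple~$\{i, p, q\}$ in both directions: first as~$a_{pq} \geq \min\{a_{pi}, a_{iq}\} = f(u)$, where the strict inequality~$a_{ip} > f(u)$ forces the minimum to equal~$f(u)$; and then as~$f(u) = a_{iq} \geq \min\{a_{ip}, a_{pq}\}$, where the same strict inequality forces~$a_{pq} \leq f(u)$.

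The main obstacle, and the only step where care is really needed, is this dual application of the ultrametric inequality. The argument hinges on the \emph{strict} inequality~$a_{ip} > f(u)$ built into the definition of~$I(v)$; without strictness, one cannot pin~$a_{pq}$ from above. Everything else --- termination, the tree structure, the~$2n - 1$ count, and claims~(i) and~(ii) --- is essentially bookkeeping once the partition of~$I(u)$ into the nonempty pieces~$I(v)$ and~$I(w)$ has been secured.
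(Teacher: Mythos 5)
Your proposal is correct, and the bookkeeping parts (the partition of $I(u)$ into the nonempty disjoint sets $I(v)$ and $I(w)$, the $2n-1$ count, the tree structure, and claim~(ii)) follow the same lines as the paper. Where you genuinely diverge is in the key step, claims~(i) and~(iii): the paper first uses Lemma~\ref{lem:min_on_row} to identify $f(u)$ with the global off-diagonal minimum of $A_{I(u)}$ and then invokes the complete-reducibility result of Theorem~\ref{thm:nabben} to conclude both that all entries of $A_{I(v)I(w)}$ equal $f(u)$ and that the diagonal blocks are again essentially ultrametric, whereas you prove~(iii) directly from Definition~\ref{def:ultrametric}(a) by the two-sided application of the ultrametric inequality to the triple $\{i,p,q\}$ (with the strictness $a_{ip}>f(u)$ doing the work in both directions), and you get~(i) from the trivial observation that the inequalities in~(a) are inherited by principal submatrices. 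Your route is more elementary and self-contained: it needs neither Lemma~\ref{lem:min_on_row} nor Theorem~\ref{thm:nabben}, and it also sidesteps a point the paper glosses over, namely that the partition produced by the algorithm need not coincide with the partition whose existence Theorem~\ref{thm:nabben} asserts (the block containing $i$ may itself contain indices $j$ with $a_{ij}=f(u)$), so identifying the two requires an extra argument that your direct computation renders unnecessary. What the paper's approach buys in exchange is brevity and a tighter conceptual link between the algorithm and the complete-reducibility structure theory it builds on.
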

\begin{proof}
At first, we examine that for a vertex~$u$ with index set~$I(u)$ of size~$|I(u)|\geq 2$ a recursion step of Algorithm~\ref{algo:tree_gen} sets $i=\min (I(u))$,~$f(u)=\min\{a_{ij}:j\in I(u)\setminus\{i\}\}$, and divides the set~$I(u)$ into two subsets
\begin{align*}
    I(v)&=\{j\in I(u):a_{ij}>f(u)\}\cup\{i\}\quad\text{and}\\
    I(w)&=\{j\in I(u)\setminus\{i\}:a_{ij}=f(u)\}.
\end{align*}
So we conclude that~\mbox{$I(v)\neq\emptyset$}, \mbox{$I(w)\neq\emptyset$}, $I(v)\cap I(w)=\emptyset$, and~\mbox{$I(u)=I(v)\cup I(w)$}. This means that subsequent recursion steps operate on a nonempty, but smaller index set. This also implies that Algorithm~\ref{algo:tree_gen}, initializing~$I(u)=\{1,\ldots,n\}$ in Line~\ref{line:initI}, has to process~\mbox{$n-1$} recursion steps that run through their else case to decompose the initial index set completely and eventually, the recursion is called with input~$u$ for which~$I(u)=\{i\}$ for each~$i\in\{1,\ldots,n\}$ at some point. This leads into the recursion's if case and thus causes the respective recursion branch to terminate. In such a case, the algorithm assigns~$f(u)=a_{ii}$ by Line~\ref{line:leaf}, which proves Statement~\ref{thm:correct_tree_gen2}. Since this happens~$n$ times, we count a total of~$2n-1$ recursion steps.

The graph~$(V,E)$ that Algorithm~\ref{algo:tree_gen} constructs is initialized by~\mbox{$V=\{r\}$} and~\mbox{$E=\emptyset$} in Lines~\ref{line:initV} and~\ref{line:initE}. This graph gets assigned new vertices and edges only in the else case of our recursion and there we always append two vertices by two edges to the graph constructed up to that point. This provides us with a connected graph that contains~$2n-1$ vertices and~$2n-2$ edges. So Algorithm~\ref{algo:tree_gen} outputs a tree and since the direction in which the edges are included follows exactly the layout of the recursion tree, we find the vertex~$r$ that is initialized in line~\ref{line:initV} to be the root, from which all other vertices can be reached by a unique directed path.

To prove Statement~\ref{thm:correct_tree_gen1}, we proceed inductively. We are given that~$A_{I(r)}=A$ is essentially ultrametric. So let us consider a recursion step with input~$u$ for which we suppose that $|I(u)|\geq 2$ and $A_{I(u)}$ is essentially ultrametric. Denoting~$\ell\coloneqq|I(u)|$ as well as~$I(v)=\{i_1,\ldots,i_k\}$ and~$I(w)=\{i_{k+1},\ldots,i_\ell\}$, we define the permutation matrix
\begin{align*}
    P^\top&=[e_{i_1},\ldots,e_{i_k},e_{i_{k+1}},\ldots,e_{i_\ell}]
\intertext{an consider}
    P@@A_{I(u)}P^\top &= \begin{bmatrix}
        A_{I(v)} & A_{I(v)I(w)} \\
        A_{I(w)I(v)} & A_{I(w)} \\
    \end{bmatrix}.
\end{align*}
Since the algorithm sets~$i=\min(I(u))$ and~$I(w)=\{j\in I(u)\setminus\{i\}:a_{ij}=f(u)\}$, all the entries in row~$i$ of~$A_{I(v)I(w)}$ are equal to~$f(u)=\min\{a_{ij}:j\in I(u)\setminus\{i\}\}$. We already observed in Lemma~\ref{lem:min_on_row} that in this way we find the smallest global off-diagonal entry~$f(u)=\{a_{ij}:i\neq j\}$. Theorem~\ref{thm:nabben} thus tells us that indeed all the entries in~$A_{I(v)I(w)}$ are equal to~$f(u)$. Consequently, we observe that~$f(u)=a_{ij}$ for all~$i\in I(v)$ and all~$j\in I(u)\setminus I(v)$ and, by symmetry, that~$f(u)=a_{ij}$ for all~$i\in I(w)$ and all~$j\in I(u)\setminus I(w)$. This proves Statement~\ref{thm:correct_tree_gen3} since we have chosen $u$ to be an arbitrary vertex among those that have children. Furthermore, Theorem~\ref{thm:nabben} implies that $A_{I(v)}$ and $A_{I(w)}$ are again essentially ultrametric, which was to be shown for Statement~\ref{thm:correct_tree_gen1}.
\end{proof}

\begin{corollary}\label{cor:time_tree_gen}
Algorithm~\ref{algo:tree_gen} requires~$\mathcal{O}(n^2)$ floating-point operations to encode an essentially ultrametric matrix~$A\in\mathbb{R}^{n\times n}$ as its associated tree structure. 
\end{corollary}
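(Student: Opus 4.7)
The plan is to count the floating-point operations in a single invocation of \textsc{TreeRecursion} and then sum over all~$2n-1$ calls counted in Theorem~\ref{thm:correct_tree_gen}.

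First I would analyze a single call on a vertex~$u$. If~$|I(u)|=1$, only Line~\ref{line:leaf} assigns~$f(u)=a_{ii}$, at cost~$\mathcal{O}(1)$. Otherwise, computing the minimum in Line~\ref{line:inner} requires roughly $|I(u)|-2$ comparisons on entries of row~$i$, and partitioning~$I(u)\setminus\{i\}$ into~$I(v)\setminus\{i\}$ and~$I(w)$ via Line~\ref{line:bigger} and the line immediately following it needs one further comparison between~$a_{ij}$ and~$f(u)$ per remaining element of~$I(u)$. The rest of the bookkeeping (determining~$\min(I(u))$, forming unions, appending vertices and edges) is combinatorial and involves no floating-point arithmetic. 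Hence each non-leaf call contributes~$\mathcal{O}(|I(u)|)$ floating-point operations and each leaf call contributes~$\mathcal{O}(1)$.

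Second, I would combine this per-call estimate with Theorem~\ref{thm:correct_tree_gen}, which tells us that there are exactly $n-1$ non-leaf calls and $n$ leaf calls, together with the trivial bound~$|I(u)|\leq n$ for every~$u\in V$, to obtain
$$\sum_{u\in V}\mathcal{O}(|I(u)|) \;\leq\; (n-1)\cdot\mathcal{O}(n) + n\cdot\mathcal{O}(1) \;=\; \mathcal{O}(n^2),$$
which is the claim.

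I do not expect any real obstacle here, since the recursion-tree structure and the vertex count supplied by Theorem~\ref{thm:correct_tree_gen} do all the work. The only point that deserves care is a clean accounting of the per-call cost, so that the two passes of comparisons inside a non-leaf call are not double-counted and purely combinatorial operations are not inflated into floating-point ones. It is worth remarking in passing that the~$\mathcal{O}(n^2)$ bound is tight in the worst case: whenever the tree constructed by Algorithm~\ref{algo:tree_gen} degenerates into a path, the sizes~$|I(u)|$ run through~$n,n-1,\ldots,2$ and already contribute~$\Theta(n^2)$, so the stated complexity cannot be strengthened uniformly in the input matrix~$A$.
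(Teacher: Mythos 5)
Your proof is correct and follows essentially the same route as the paper: the $2n-1$ recursion calls from Theorem~\ref{thm:correct_tree_gen} combined with an $\mathcal{O}(n)$ per-call bound on the comparisons needed for~$f(u)$, $I(v)$, and~$I(w)$. Your accounting is slightly finer (per-call cost $\mathcal{O}(|I(u)|)$ before relaxing to $\mathcal{O}(n)$, plus the worst-case tightness remark), but this does not change the argument.
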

\begin{proof}
The algorithm terminates after~$2n-1$ recursion calls by Theorem~\ref{thm:correct_tree_gen}. Each recursion step requires~$\mathcal{O}(n)$ floating-point operations to determine~$f(u)$,~$I(v)$, and~$I(w)$, as for each of them at most~$n$ comparisons have to be performed. So in total we count~$\mathcal{O}(n^2)$ floating-point operations.
\end{proof}

\section{Fast matrix-vector multiplication}\label{sec:multiplication}
The following algorithm is designed to perform fast matrix-vector multiplications for a matrix that is given in its ultrametric tree representation~$(V,E)$ constructed by Algorithm~\ref{algo:tree_gen}. As before, each vertex~$u\in V$ is provided with an index set~$I(u)$ and a value~$f(u)$. In addition, we assign values~$s(u)$,~$t(u)$ and~$p(u)$ in what follows.

\hypertarget{tree_mult_link}{\begin{algorithm}[ht]
\caption{Ultrametric Multiplication}\label{algo:tree_mul}
\begin{algorithmic}[1]
\Require{ultrametric tree~$(V,E)$ with root vertex~$r$ constructed by Algorithm~\ref{algo:tree_gen} for an essentially ultrametric matrix~$A\in\mathbb{R}^{n\times n}$, vector~$x\in\mathbb{R}^n$}
\Ensure{product~$y = Ax$}
\Statex
\State \Call{PartialProduct}{$r$,~$0$}
\State \Call{TotalProduct}{$r$,~$0$}
\Statex
\Procedure{PartialProduct}{$u$, $z$}
\If{$|I(u)| = 1$}
\State $s(u) \gets x_i$ where~$i\in I(u)$\label{line:assign_s}
\Else
\State $s(u) \gets \hspace{-9mm}\displaystyle\sum\limits_{~~~~~v \in V: (u, v) \in E}\hspace{-9mm} \Call{PartialProduct}{v, f(u)}$ \label{line:partial_product_recursion}
\EndIf
\State $t(u) \gets (f(u) - z)@@s(u)$\label{line:assign_t}
\State \Return{$s(u)$}
\EndProcedure
\Statex
\Procedure{TotalProduct}{$u$, $q$}
\State $p(u) \gets q + t(u)$\label{line:assign_p}
\If{$|I(u)| = 1$}
\State $y_i \gets p(u)$ where~$i\in I(u)$\label{line:assign_y}
\Else
\ForAll {$v \in V: (u, v) \in E$} \label{line:for}
\State $\Call{TotalProduct}{v, p(u)}$ \label{line:total_product_recursion}
\EndFor
\EndIf
\EndProcedure
\end{algorithmic}
\end{algorithm}}
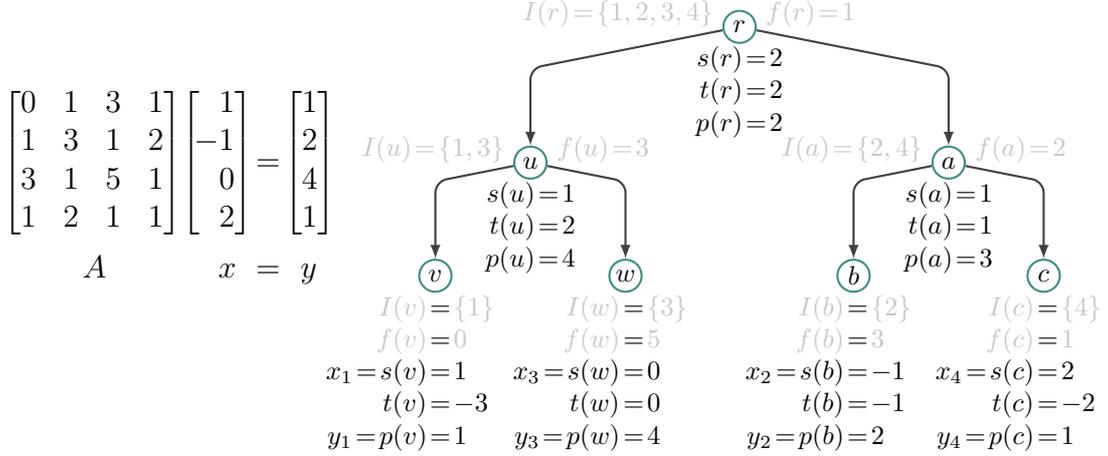
\begin{figure}
\begin{center}
\begin{tikzpicture}

\node at (-3.5,1.5) {$\begin{aligned}
        &\\
     \begin{bmatrix} 
0 & 1 & 3 & 1 \\
1 & 3 & 1 & 2 \\
3 & 1 & 5 & 1 \\
1 & 2 & 1 & 1 
\end{bmatrix} \begin{bmatrix} \!\negphantom{1}\phantom{-1}1 \\ \!-1 \\ \!\negphantom{0}\phantom{-1}0 \\ \!\negphantom{2}\phantom{-1}2 \end{bmatrix} &\!=\!\begin{bmatrix} 1 \\ 2 \\ 4 \\ 1 \end{bmatrix}\\
A\hspace{14.75mm}x\hspace{2.15mm} &\!=\!\hspace{2.15mm}y
  \end{aligned}$};

\footnotesize

\node[thick, draw=chroma4green, inner sep=1.5mm, circle] (1) at (4,3.3) {};
\node at (1) {$r$};
\node[thick, draw=chroma4green, inner sep=1.5mm, circle] (2) at (1.25,1.5) {};
\node at (2) {$u$};
\node[thick, draw=chroma4green, inner sep=1.5mm, circle] (3) at (6.75,1.5) {};
\node at (3) {$a$};
\node[thick, draw=chroma4green, inner sep=1.5mm, circle] (4) at (0,0) {};
\node at (4) {$v$};
\node[thick, draw=chroma4green, inner sep=1.5mm, circle] (5) at (2.5,0) {};
\node at (5) {$w$};
\node[thick, draw=chroma4green, inner sep=1.5mm, circle] (6) at (5.5,0) {};
\node at (6) {$b$};
\node[thick, draw=chroma4green, inner sep=1.5mm, circle] (7) at (8,0) {};
\node at (7) {$c$};

\node[above=-1.25mm] at (1) {\textcolor{chroma4gray}{$I(r)\!=\!\{1,2,3,4\}\hspace{7mm}f(r)\!=\!1\negphantom{f(r)\!=\!1}\phantom{I(r)\!=\!\{1,2,3,4\}}$}};

\node[below=1mm] at (1) {$\begin{aligned}
     s(r)\!&=\!2 \\[-1mm]
     t(r)\!&=\!2 \\[-1mm]
     p(r)\!&=\!2
  \end{aligned}$};

\node[above=-1.25mm] at (2) {\textcolor{chroma4gray}{$I(u)\!=\!\{1,3\}\hspace{7mm}f(u)\!=\!3\negphantom{f(u)\!=\!3}\phantom{I(u)\!=\!\{1,3\}}$}};

\node[below=1mm] at (2) {$\begin{aligned}
     s(u)\!&=\!1 \\[-1mm]
     t(u)\!&=\!2 \\[-1mm]
     p(u)\!&=\!4
  \end{aligned}$};

\node[above=-1.25mm] at (3) {\textcolor{chroma4gray}{$I(a)\!=\!\{2,4\}\hspace{7mm}f(a)\!=\!2\negphantom{f(a)\!=\!2}\phantom{I(a)\!=\!\{2,4\}}$}};

\node[below=1mm] at (3) {$\begin{aligned}
     s(a)\!&=\!1 \\[-1mm]
     t(a)\!&=\!1 \\[-1mm]
     p(a)\!&=\!3
  \end{aligned}$};

\node[below=1mm] at (4) {$\begin{aligned}
     \textcolor{chroma4gray}{I(v)}\!&=\!\textcolor{chroma4gray}{\{1\}}\\[-1mm]
     \textcolor{chroma4gray}{f(v)}\!&=\!\textcolor{chroma4gray}{0} \\[-1mm]
     \negphantom{x_1=}x_1\!=\!s(v)\!&=\!1 \\[-1mm]
     t(v)\!&=\!-3 \\[-1mm]
     \negphantom{y_1=}y_1\!=\!p(v)\!&=\!1
  \end{aligned}$};

\node[below=1mm] at (5) {$\begin{aligned}
     \textcolor{chroma4gray}{I(w)}\!&=\!\textcolor{chroma4gray}{\{3\}} \\[-1mm]
     \textcolor{chroma4gray}{f(w)}\!&=\!\textcolor{chroma4gray}{5} \\[-1mm]
     \negphantom{x_3=}x_3\!=\!s(w)\!&=\!0 \\[-1mm]
     t(w)\!&=\!0 \\[-1mm]
     \negphantom{y_3=}y_3\!=\!p(w)\!&=\!4
  \end{aligned}$};

\node[below=1mm] at (6) {$\begin{aligned}
     \textcolor{chroma4gray}{I(b)}\!&=\!\textcolor{chroma4gray}{\{2\}} \\[-1mm]
     \textcolor{chroma4gray}{f(b)}\!&=\!\textcolor{chroma4gray}{3} \\[-1mm]
     \negphantom{x_2=}x_2\!=\!s(b)\!&=\!-1 \\[-1mm]
     t(b)\!&=\!-1 \\[-1mm]
     \negphantom{y_2=}y_2\!=\!p(b)\!&=\!2
  \end{aligned}$};

\node[below=1mm] at (7) {$\begin{aligned}
     \textcolor{chroma4gray}{I(c)}\!&=\!\textcolor{chroma4gray}{\{4\}} \\[-1mm]
     \textcolor{chroma4gray}{f(c)}\!&=\!\textcolor{chroma4gray}{1} \\[-1mm]
     \negphantom{x_4=}x_4\!=\!s(c)\!&=\!2 \\[-1mm]
     t(c)\!&=\!-2 \\[-1mm]
     \negphantom{y_4=}y_4\!=\!p(c)\!&=\!1
  \end{aligned}$};

\draw[->, >=latex, thick, black!75!white, rounded corners=1.5mm] (1) to (1.25,2.75) to (2);
\draw[->, >=latex, thick, black!75!white, rounded corners=1.5mm] (1) to (6.75,2.75) to (3);
\draw[->, >=latex, thick, black!75!white, rounded corners=1.5mm] (2) to (0,1.25) to (4);
\draw[->, >=latex, thick, black!75!white, rounded corners=1.5mm] (2) to (2.5,1.25) to (5);
\draw[->, >=latex, thick, black!75!white, rounded corners=1.5mm] (3) to (5.5,1.25) to (6);
\draw[->, >=latex, thick, black!75!white, rounded corners=1.5mm] (3) to (8,1.25) to (7);

\end{tikzpicture}
\end{center}
\caption{An ultrametric matrix-vector multiplication~$Ax=y$ performed by Algorithm~\ref{algo:tree_mul}. The annotations in gray belong to the input generated by Algorithm~\ref{algo:tree_gen}, those in black are the values Algorithm~\ref{algo:tree_mul} determines.}\label{fig:mult_example}
\end{figure}

\begin{theorem}
Let~$(V,E)$ be an ultrametric tree with root vertex~$r$ constructed by Algorithm~\ref{algo:tree_gen} for an essentially ultrametric matrix~$A\in\mathbb{R}^{n\times n}$ and let~$x$ be a vector in~$\mathbb{R}^n$. Then Algorithm~\ref{algo:tree_mul} with input~$(V,E)$ and~$x$ terminates with output~$y=Ax$.
\end{theorem}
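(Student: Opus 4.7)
The plan is to combine a bottom-up induction for \textsc{PartialProduct} with a top-down telescoping for \textsc{TotalProduct}. First I would show by induction on the tree~$(V,E)$ that for each~$u\in V$ the value~$s(u)$ equals $S(u)\coloneqq\sum_{j\in I(u)}x_j$. At a leaf this is immediate from Line~\ref{line:assign_s}; at an internal vertex, Line~\ref{line:partial_product_recursion} together with the fact (established in the proof of Theorem~\ref{thm:correct_tree_gen}) that the index sets of the two children disjointly partition~$I(u)$ delivers the claim.

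Next I would track how~$t$ and~$p$ propagate. A straightforward induction along the unique directed path $u_0,u_1,\ldots,u_k=r$ from a leaf~$u_0$ up to the root shows that $p(u_0)=\sum_{l=0}^{k}t(u_l)$, because Line~\ref{line:total_product_recursion} passes $q=p(\text{parent})$ into every recursive call and the outermost call uses $q=0$. Bookkeeping of the $z$-argument of \textsc{PartialProduct} in the same way yields $t(u_l)=\bigl(f(u_l)-f(u_{l+1})\bigr)s(u_l)$ for $l<k$ and $t(u_k)=f(u_k)\,s(u_k)$. Applying Abel summation and substituting $s(u_l)=S(u_l)$ from the first step rewrites this as
\begin{equation*}
p(u_0)=f(u_0)\,S(u_0)+\sum_{l=1}^{k} f(u_l)\bigl(S(u_l)-S(u_{l-1})\bigr).
\end{equation*}

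To identify this with $(Ax)_i$ for the leaf~$u_0$ with $I(u_0)=\{i\}$, I would invoke Theorem~\ref{thm:correct_tree_gen}: the first term equals $a_{ii}x_i$ since $f(u_0)=a_{ii}$ and $S(u_0)=x_i$. For each $l\geq 1$, statement~\ref{thm:correct_tree_gen3} of Theorem~\ref{thm:correct_tree_gen} forces $a_{ij}=f(u_l)$ for every $j\in I(u_l)\setminus I(u_{l-1})$, since $u_{l-1}$ is a child of $u_l$ and $i\in I(u_0)\subset I(u_{l-1})$. Because the differences $I(u_l)\setminus I(u_{l-1})$ for $l=1,\ldots,k$ together with $\{i\}$ partition $\{1,\ldots,n\}$, the expression collapses to $\sum_{j=1}^{n}a_{ij}x_j=(Ax)_i$, which is exactly what Line~\ref{line:assign_y} assigns to~$y_i$.

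The main subtlety will be aligning the indices: the $z$-argument of \textsc{PartialProduct} refers to the $f$-value of the parent of~$u_l$, whereas the $q$-argument of \textsc{TotalProduct} refers to the $p$-value of that parent, so only after the Abel summation does each summand become naturally indexed by the level~$u_l$ itself, at which point statement~\ref{thm:correct_tree_gen3} can be applied term by term.
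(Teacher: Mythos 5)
Your proposal is correct and follows essentially the same route as the paper's own proof: establishing $s(u)=\sum_{j\in I(u)}x_j$, telescoping the $t$-values along the unique root-to-leaf path (you index it leaf-to-root, the paper root-to-leaf), rearranging by Abel summation, and invoking statement~\ref{thm:correct_tree_gen3} of Theorem~\ref{thm:correct_tree_gen} to identify $f$ on each difference set $I(u_l)\setminus I(u_{l-1})$ with the entries $a_{ij}$. The only differences are cosmetic (direction of indexing, and the paper treats $n=1$ and termination/well-definedness of $y$ a bit more explicitly), so no further changes are needed.
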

\begin{proof}
In case $n=1$, both procedures of Algorithm~\ref{algo:tree_mul} only activate their if case. So they terminate after their first iteration and sequentially assign~$s(r)=x_1$ in Line~\ref{line:assign_s},~$t(r)=f(r)s(r)=a_{11}x_1$ in Line~\ref{line:assign_t}, $p(r)=t(r)=a_{11}x_1$ in Line~\ref{line:assign_p}, and eventually $y_1=p(r)=a_{11}x_1$ in Line~\ref{line:assign_y}, which shows the correctness of Algorithm~\ref{algo:tree_mul} for~$n=1$.

By Theorem~\ref{thm:correct_tree_gen}, each vertex in~$V$ can be reached from the root~$r$ by a unique directed path. So each vertex in~$V$ is either a leaf or has outgoing edges to child vertices and each vertex except the root~$r$ has a uniquely determined parent. Both procedures of Algorithm~\ref{algo:tree_mul} initially get the root~$r$ as input. A recursion step receiving a vertex~$u$ calls, in case $u$ is not a leaf, recursion procedures for all children of $u$, due to Lines~\ref{line:partial_product_recursion}, \ref{line:for} and~\ref{line:total_product_recursion}. This causes the algorithm to terminate and also shows that both procedures of Algorithm~\ref{algo:tree_mul} are called for each vertex in~$V$ at some point and thus that the assignments in Lines~\ref{line:assign_s} and~\ref{line:assign_y} eventually are realized for each~$i\in\{1,\ldots,n\}$. So the algorithm's output~$y$ is well-defined. Recall for this conclusion that Theorem~\ref{thm:correct_tree_gen} ensures that there is a leaf~$u\in V$ with~$I(u)=\{i\}$ for each~$i\in\{1,\ldots,n\}$. In view of Lines~\ref{line:assign_s} and~\ref{line:partial_product_recursion}, this also implies that
\begin{equation*}
    s(u)=\sum_{\mathclap{~~~~~v\in V:(u,v)\in E}}s(v) =\ldots = \sum_{\mathclap{~j\in I(u)}}x_j.
\end{equation*}
We use this relationship in the following steps whose purpose is to show that indeed~$y=Ax$ holds for~$n\geq 2$. Let us consider an arbitrary~\mbox{$i\in\{1,\ldots,n\}$} and let~$u_0,\ldots,u_m$ be the vertices on the unique directed path in~$(V,E)$ that leads from the root~$r=u_0$ to the vertex~$u_m$ with~$I(u_m)=\{i\}$. Since we discussed the case~$n=1$, we can assume that~$m\geq 1$ and we know that $f(u_m)=a_{ii}$ by Statement~\ref{thm:correct_tree_gen2} of Theorem~\ref{thm:correct_tree_gen}. Line~\ref{line:assign_y} tells us that~$y_i=p(u_m)$. In view of Lines~\ref{line:assign_p} and~\ref{line:assign_t}, this provides us with
\begin{align*}
    y_i &= p(u_m) = p(u_{m-1}) + t(u_m) = \ldots = \sum_{\mathclap{k=0}}^m t(u_k) \\
        &= \Big[\sum_{\mathclap{k=1}}^m \big(f(u_k)-f(u_{k-1})\big)@s(u_k)\Big] + f(u_0)@@s(u_0)\\
        &= \Big[\sum_{\mathclap{k=1}}^m \big((f(u_k)-f(u_{k-1})\big)\sum_{\mathclap{~~j\in I(u_k)}}x_j\Big] + f(u_0)\sum_{\mathclap{~~j\in I(u_0)}}x_j\\
        &= f(u_m)\sum_{\mathclap{~~~j\in I(u_m)}}x_j + \sum_{\mathclap{k=1}}^{m}f(u_{k-1})\Big[\sum_{\mathclap{~~~~~~j\in I(u_{k-1})}}x_j ~-~ \sum_{\mathclap{~~j\in I(u_{k})}}x_j\Big]\\
        &= f(u_m)@@x_i + \sum_{\mathclap{k=1}}^{m}~\sum_{\mathclap{~~~~~~~~~~~j\in I(u_{k-1})\setminus I(u_k)}}f(u_{k-1})@@x_j\\
        &= a_{ii}@@x_i + \sum_{\mathclap{k=1}}^{m}~\sum_{\mathclap{~~~~~~~~~~~j\in I(u_{k-1})\setminus I(u_k)}}a_{ij}@@x_j ~~~= \sum_{\mathclap{j=1}}^{n}a_{ij}@@x_j.
\end{align*}
which is the relation to be shown. Note for the second to last equality that~\mbox{$i\in I(u_k)$} for all~$k\in\{0,\ldots,m\}$ and therefore Statement~\ref{thm:correct_tree_gen3} of Theorem~\ref{thm:correct_tree_gen} tells us that~$f(u_{k-1})=a_{ij}$ for~$j\in I(u_{k-1})\setminus I(u_k)$.
\end{proof}

\begin{corollary}\label{cor:tree_mult_time}
Algorithm~\ref{algo:tree_mul} requires~$\mathcal{O}(n)$ floating-point operations to multiply an essentially ultrametric matrix~$A\in\mathbb{R}^{n\times n}$ given in its tree representation~$(V,E)$ by a vector $x\in\mathbb{R}^n$. 
\end{corollary}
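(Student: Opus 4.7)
The plan is to bound the total cost of Algorithm~\ref{algo:tree_mul} by counting how many recursion calls are made and how much work each performs, using the structural information about $(V,E)$ that we already established in Theorem~\ref{thm:correct_tree_gen}.

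First, I would invoke Theorem~\ref{thm:correct_tree_gen} to record the two facts I really need: the tree $(V,E)$ has exactly $2n-1$ vertices, and every vertex is reachable from the root $r$ by a unique directed path, so that $|E|=2n-2$. Next, I would argue, as was already done in the preceding correctness proof, that each of the two procedures \textsc{PartialProduct} and \textsc{TotalProduct} is called exactly once for each vertex $u\in V$: the outer invocations hand them the root, and each internal vertex forwards the call to each of its children exactly once (through Line~\ref{line:partial_product_recursion} and Lines~\ref{line:for}/\ref{line:total_product_recursion}, respectively).

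Then I would account for the work performed inside a single call at a vertex $u$. For \textsc{PartialProduct}, the leaf branch costs $\mathcal{O}(1)$ (one assignment in Line~\ref{line:assign_s} plus the update of $t(u)$ in Line~\ref{line:assign_t}); the internal branch costs $\mathcal{O}(1)$ plus one addition per outgoing edge $(u,v)\in E$ for the summation in Line~\ref{line:partial_product_recursion}. Summing across all vertices, the edge-counted contributions telescope to $|E|=2n-2$ and the per-vertex contributions to $|V|=2n-1$, giving $\mathcal{O}(n)$ overall. The argument for \textsc{TotalProduct} is analogous: Line~\ref{line:assign_p} and Line~\ref{line:assign_y} each cost $\mathcal{O}(1)$ per vertex, and the loop in Line~\ref{line:for} contributes one iteration per outgoing edge, again summing to $\mathcal{O}(n)$.

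No step is really hard here; the only thing worth being careful about is that the per-call cost is \emph{not} $\mathcal{O}(1)$ in the naive sense, because an internal vertex iterates over its children, so one must use the global edge count $|E|=2n-2$ rather than a pointwise bound. Once that is stated, adding the two $\mathcal{O}(n)$ bounds for \textsc{PartialProduct} and \textsc{TotalProduct} yields the claimed $\mathcal{O}(n)$ total.
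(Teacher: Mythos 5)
Your proof is correct and takes essentially the same route as the paper: both procedures are invoked exactly once per vertex, $|V|=2n-1$ by Theorem~\ref{thm:correct_tree_gen}, and the work per call is bounded, giving $\mathcal{O}(n)$ in total. The only difference is that you charge the child iterations to edges ($|E|=2n-2$) out of concern that a call is not $\mathcal{O}(1)$ pointwise; in fact Algorithm~\ref{algo:tree_gen} produces a binary tree in which every internal vertex has exactly two children, so the paper's simpler $\mathcal{O}(1)$-per-call bound is already valid, and your edge-amortization, while correct and slightly more general, is not needed here.
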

\begin{proof}
Both procedures of Algorithm~\ref{algo:tree_mul} are called exactly once per vertex in $V$. We know that $|V|=2n-1$ by the proof of Theorem~\ref{thm:correct_tree_gen}. Since each call of one of the procedures involves $\mathcal{O}(1)$ floating-point operations, Algorithm~\ref{algo:tree_mul} requires a total of~$\mathcal{O}(n)$ floating-point operations.
\end{proof}
Note that whereas the running time of Algorithm~\ref{algo:tree_mul} is linear, it requires a matrix that has been encoded as an ultrametric tree. By Corollary~\ref{cor:time_tree_gen}, this can be done in quadratic time using Algorithm~\ref{algo:tree_gen}.

\section{Empirical Insights}\label{sec:empirical}
This section is intended to evaluate the practical performance of the algorithms discussed in the previous sections. We begin by presenting computation times for constructing ultrametric trees by Algorithm~\ref{algo:tree_gen} as well as times that matrix-vector multiplications require when using Algorithm~\ref{algo:tree_mul}. We compare this to the effort involved in standard matrix-vector multiplications. By the term \emph{standard} we refer to a routine that determines the matrix-vector product $y=Ax$ simply by computing $y_i = \sum_{j = 1}^{n} a_{ij} x_j$ for each $i \in \{ 1, \ldots , n \}$. The second half of this section extends this investigation to scenarios in which we want to multiply repeatedly.

Our experiments are conducted on randomly generated matrices for whose generation we rely on the following characterization by Fiedler~\cite{fiedler2002remarks}.
\begin{theorem}\label{thm:fiedler}
Up to a simultaneous permutation of rows and columns, each special ultrametric matrix~$A=[a_{ij}]\in\mathbb{R}^{n\times n}$ with~$n\geq 2$ can be obtained by choosing~$n-1$ numbers~$a_{12},a_{23},\ldots,a_{n-1,n}$ and setting
\begin{align*}
    a_{11}=a_{12},\quad a_{ii}&=\max\{a_{i-1,i},a_{i,i+1}\}\text{ for }i=2,\ldots, n-1,\quad a_{nn}=a_{n-1,n},\\
    a_{ik}&=\min\{a_{i,k-1},a_{i+1,k}\}\text{ for all }i,k\text{ where }1\leq i < k-1 \leq n-1,\\
    a_{ki}&=a_{ik}\text{ for all }i,k\text{ where } i>k.
\end{align*}
\end{theorem}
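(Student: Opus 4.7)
The statement is an equivalence, and I would address its two directions separately. The easier direction is to verify that any matrix produced by Fiedler's recipe is special ultrametric; the harder direction is to show that every special ultrametric matrix, after a suitable simultaneous row/column permutation, arises in this way. The block decomposition of Theorem~\ref{thm:nabben} is the main tool for the latter.

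For the constructive direction, the first step would be to establish by induction on $k - i$ the closed formula
\[
a_{ik} = \min\{a_{m,m+1} : i \leq m \leq k-1\} \qquad \text{for } 1 \leq i < k \leq n.
\]
The base case $k = i+1$ is trivial, and the inductive step follows directly from the defining recursion combined with the induction hypothesis applied to $a_{i,k-1}$ and $a_{i+1,k}$. From this formula the ultrametric inequality becomes transparent: for three distinct indices, relabelled as $i < j < k$, the index sets defining $a_{ij}$ and $a_{jk}$ partition the one defining $a_{ik}$, so $a_{ik} = \min\{a_{ij}, a_{jk}\}$, from which $a_{ij} \geq \min\{a_{ik}, a_{jk}\}$ and its symmetric variants follow. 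The special diagonal identity holds because the closed formula implies $a_{ij} \leq a_{i,i+1}$ for $j > i$ and $a_{ij} \leq a_{i-1,i}$ for $j < i$, while the diagonal is defined as the maximum of these two neighbouring off-diagonal values.

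For the converse direction, I would proceed by induction on $n$ using Theorem~\ref{thm:nabben}. The base case $n = 2$ is immediate, since every special ultrametric $2\times 2$ matrix must have $a_{11} = a_{22} = a_{12}$. For $n > 2$, set $\mu = \min\{a_{ij} : i \neq j\}$, apply Theorem~\ref{thm:nabben} to obtain a permutation $P$ and blocks $B$ and $C$ of sizes $k$ and $n-k$ with $P(A - \mu\mathds{1}\mathds{1}^\top)P^\top = \mathrm{diag}(B, C)$, verify that the special ultrametric property descends from $A$ to both $B$ and $C$ (since the column-pointwise diagonal dominance condition is preserved under subtracting a uniform constant from off-diagonal and diagonal entries alike), and then invoke the inductive hypothesis to reorder each block into Fiedler form. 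Concatenating the two orderings produces a global permutation under which all cross-block off-diagonals equal $\mu$, matching the closed formula of the first direction with $a_{k,k+1} = \mu$. The main obstacle is the diagonal condition at the block boundary: one must check that $a_{kk} = \max\{a_{k-1,k}, a_{k,k+1}\}$ and its analogue at index $k+1$ hold in the new ordering. When both blocks have size at least two, the inductive hypothesis yields the Fiedler identity $B_{kk} = B_{k-1,k}$, and combined with $a_{k,k+1} = \mu \leq a_{k-1,k}$ (which holds because $B$ is nonnegative) the maximum is attained as required. The edge case of a singleton block needs a separate short argument: if, say, the first block consists of the single index $\alpha$, then the special ultrametric hypothesis on $A$ forces $a_{\alpha\alpha} = \max_{j\neq \alpha} a_{\alpha j} = \mu$, which is exactly what Fiedler's recipe demands at the endpoint.
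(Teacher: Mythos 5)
The paper offers no proof of Theorem~\ref{thm:fiedler}: it is imported verbatim from Fiedler~\cite{fiedler2002remarks} and used only to generate random test matrices, so there is no in-paper argument to compare yours against. Judged on its own, your proof is essentially correct and follows the natural route consistent with the paper's machinery: the closed formula $a_{ik}=\min\{a_{m,m+1}: i\le m\le k-1\}$ does give both the ultrametric inequalities and the special diagonal identity for the constructive direction, and the converse by induction on $n$ via the block decomposition of Theorem~\ref{thm:nabben} works, with the genuine subtleties (the diagonal condition at the block boundary and singleton blocks) correctly identified and handled. Two points deserve tightening. First, your parenthetical justification that the special ultrametric property ``descends'' to $B$ and $C$ because the defining condition survives subtraction of a uniform constant is not by itself sufficient: the condition $a_{ii}=\max\{a_{ij}:j\ne i\}$ ranges over all of $\{1,\ldots,n\}$, so you must also note that every cross-block entry equals $\mu=\min\{a_{ij}:i\ne j\}$, which is dominated by every within-block off-diagonal entry, so the maximum is attained inside the block whenever the block has size at least two; this is the same observation you invoke later at the boundary index, so it is a phrasing gap rather than a mathematical one, and your separate treatment of singleton blocks covers the only case where the within-block maximum does not exist. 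Second, for your ``easier direction'' to produce a special ultrametric matrix the chosen numbers $a_{12},\ldots,a_{n-1,n}$ must be nonnegative (otherwise nonnegativity of $A$ fails); note also that the theorem as stated only claims that every special ultrametric matrix arises from the recipe, so that direction, while worth recording, is strictly speaking supplementary.
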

In our tests, the numbers~$a_{12}, a_{23}, \ldots , a_{n-1, n}$ are taken uniformly at random from $\{ 1, \ldots , n-1 \}$ and all the other matrix entries are determined as described in Theorem~\ref{thm:fiedler}. Having generated such a matrix, we randomly perform a simultaneous permutation of its rows and columns. This is to avoid unintended advantages for our algorithms, which is to be expected when the entries of the input matrices are already presorted. As well, the entries of the vectors to be multiplied are chosen uniformly at random from~$\{ 1, \ldots , n-1 \}$. The source code used to generate the data as well as implementations of Algorithms~\ref{algo:tree_gen} and~\ref{algo:tree_mul} are available under Hofmann and Oertel~\cite{Ultrametric_matrix_tools_2021}.
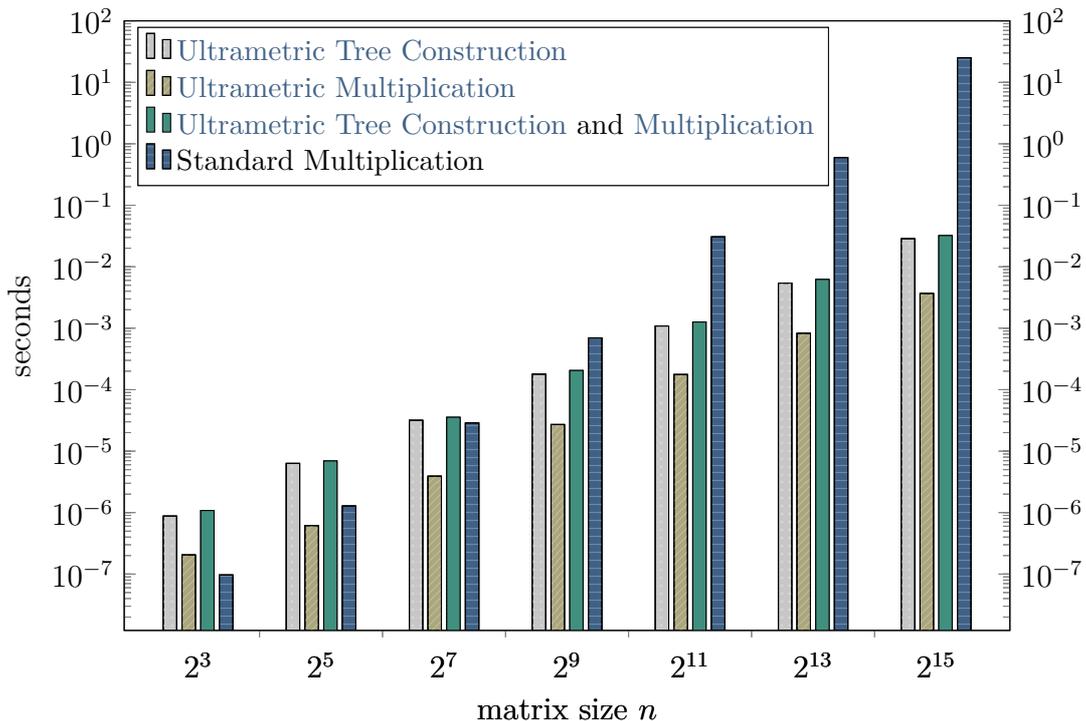
\begin{figure}[ht]
\centering
\pgfplotstableread[col sep=comma]{data_single.csv}\singledata
\begin{tikzpicture}
\begin{semilogyaxis}[y=3.54mm, ybar, log origin=infty, xtick=data, width=0.9\textwidth, height=0.4\textheight, scaled ticks=false, xticklabels={$2^3$, $2^5$, $2^7$, $2^9$, $2^{11}$, $2^{13}$, $2^{15}$}, ylabel=seconds, xlabel=matrix size~$n$, major x tick style = {opacity=0}, minor x tick num = 1, minor tick length=0.5ex, legend style={at={(0.015,0.99)}, anchor=north west}, xtick pos=left, yticklabel pos=left, legend cell align={left}, bar width=5pt, ymax=100]

\addplot[draw=black, fill=chroma4gray, pattern color=chroma4gray!75!white,postaction={pattern=dots}] table[x=pos, y=tree_gen_mean] \singledata;
\addlegendentry{\small\hyperlink{tree_link}{Ultrametric Tree Construction}}

\addplot[draw=black, fill=chroma4sand, pattern color=chroma4sand!75!white, postaction={pattern=north east lines}] table[x=pos, y=tree_mult_mean] \singledata;
\addlegendentry{\small\hyperlink{tree_mult_link}{Ultrametric Multiplication}}

\addplot[draw=black, fill=chroma4green, pattern color=chroma4green!75!white] table[x=pos, y=complete_tree_mult_mean] \singledata;
\addlegendentry{\small\hyperlink{tree_link}{Ultrametric Tree Construction} and \hyperlink{tree_mult_link}{Multiplication}}

\addplot[draw=black, fill=chroma4blue, pattern color=chroma4blue!75!white,postaction={pattern=horizontal lines}] table[x=pos, y=normal_mult_mean] \singledata;
\addlegendentry{\small Standard Multiplication}

\end{semilogyaxis}
\begin{semilogyaxis}[y=3.54mm, ybar, log origin=infty, xtick=data, width=0.9\textwidth, height=0.4\textheight, scaled ticks=false, xticklabels={$2^3$, $2^5$, $2^7$, $2^9$, $2^{11}$, $2^{13}$, $2^{15}$}, xlabel=matrix size~$n$, major x tick style = {opacity=0}, minor x tick num = 1, minor tick length=0.5ex, legend style={at={(0.015,0.99)}, anchor=north west}, xtick pos=left, yticklabel pos=right, legend cell align={left}, bar width=5pt, ymax=100]

\addplot[draw=black, fill=chroma4gray, pattern color=chroma4gray!75!white,postaction={pattern=dots}] table[x=pos, y=tree_gen_mean] \singledata;
\addlegendentry{\small\hyperlink{tree_link}{Ultrametric Tree Construction}}

\addplot[draw=black, fill=chroma4sand, pattern color=chroma4sand!75!white, postaction={pattern=north east lines}] table[x=pos, y=tree_mult_mean] \singledata;
\addlegendentry{\small\hyperlink{tree_mult_link}{Ultrametric Multiplication}}

\addplot[draw=black, fill=chroma4green, pattern color=chroma4green!75!white] table[x=pos, y=complete_tree_mult_mean] \singledata;
\addlegendentry{\small\hyperlink{tree_link}{Ultrametric Tree Construction} and \hyperlink{tree_mult_link}{Multiplication}}

\addplot[draw=black, fill=chroma4blue, pattern color=chroma4blue!75!white,postaction={pattern=horizontal lines}] table[x=pos, y=normal_mult_mean] \singledata;
\addlegendentry{\small Standard Multiplication}

\end{semilogyaxis}
\end{tikzpicture}
\caption{Computation times for matrix-vector products. The results in this chart are averaged over~$10$ runs with random ultrametric matrices as input.}
\end{figure}\label{fig:single}
Figure~\ref{fig:single} shows computation times for a single multiplication of an ultrametric matrix by a vector. We compare the time a standard multiplication takes with the time for multiplying by Algorithm~\ref{algo:tree_mul}. The latter algorithm requires that the input matrix is given in its tree representation. So we additionally consider the time that Algorithm~\ref{algo:tree_gen} needs to construct a corresponding ultrametric tree. All the computation times are averaged over 10 runs with varied matrices and vectors.

For very small matrices, the standard routine is faster than the tree multiplication by Algorithm~\ref{algo:tree_mul}, even without counting the effort for encoding an ultrametric matrix as its associated tree structure. For matrices up to a size of about $n=2^7$, the tree multiplication may be faster than the standard method. However, counting the total duration, including the time required to encode the given matrix as its ultrametric tree, the standard routine is still to be preferred. For larger matrix sizes, the methods we propose consume considerably less time than a standard routine. For example, multiplying a matrix of size $n=2^{15}$ by a vector is about~780 times faster compared to using a standard multiplication. Within our methods, the largest portion of the computation time is required by the tree construction. So applying the proposed methods may especially pay off in situations where we want to multiply repeatedly. To demonstrate this, we conclude this section with an example in which our ultrametric multiplication techniques are used as part of an iterative matrix method.

Suppose we want to compute an approximate solution~$x\in\mathbb{R}^n$ to a system of linear equations~$Ax=b$ with~$b\in\mathbb{R}^n$ where~$A=[a_{ij}]\in\mathbb{R}^{n\times n}$ is a diagonal dominant ultrametric matrix. A classical iterative scheme to solve such a system is the Jacobi method, presented by Golub and Van Loan~\cite[Chapter 11]{golub2013matrix}, for example. The basic idea behind this method is to compute a sequence~$(x^k)$ that, under certain conditions, converges to~$x=A^{-1}b$ by iterating
\begin{equation*}
    x^{k+1} = D^{-1}\left(b-Bx^{k}\right),
\end{equation*}
where~$D\coloneqq\diag(a_{ii} : i=1,\ldots,n)$ contains the diagonal of~$A$ and~$B\coloneqq A-D$ contains the off-diagonal elements of~$A$. Since here the inversion of the diagonal matrix~$D$ is computationally simple, the effort of an iteration is largely determined by the cost of the matrix-vector multiplication~$Bx^k$, for which we propose our ultrametric multiplication techniques. Since a lot of iterative matrix methods rely on repeated matrix-vector multiplications, our techniques may be of use in many of them, provided that the ultrametric structure is preserved throughout the iterations to be performed.

\begin{figure}
\centering
\pgfplotstableread[col sep=comma]{data_jacobi.csv}\jacobidata
\begin{tikzpicture}
\begin{semilogyaxis}[y=3.54mm, ybar, log origin=infty, xtick=data, width=0.9\textwidth, height=0.4\textheight, scaled ticks=false, xticklabels={$2^3$, $2^5$, $2^7$, $2^9$, $2^{11}$, $2^{13}$, $2^{15}$}, ylabel=seconds, xlabel=matrix size~$n$, major x tick style = {opacity=0}, minor x tick num = 1, minor tick length=0.5ex, legend style={at={(0.015,0.99)}, anchor=north west}, xtick pos=left, legend cell align={left}, bar width=5pt, legend style={cells={align=left}}, ymax=5000, ymin=0.000001]

\addplot[draw=black, fill=chroma4gray, pattern color=chroma4gray!75!white,postaction={pattern=dots}] table[x=pos, y=tree_gen_mean] \jacobidata;
\addlegendentry{\small\hyperlink{tree_link}{Ultrametric Tree Construction}}

\addplot[draw=black, fill=chroma4sand, pattern color=chroma4sand!75!white, postaction={pattern=north east lines}] table[x=pos, y=tree_algo_mean] \jacobidata;
\addlegendentry{\small Jacobi Method using \hyperlink{tree_mult_link}{Ultrametric Multiplication}}

\addplot[draw=black, fill=chroma4green, line width=0, pattern color=chroma4green!75!white] table[x=pos, y=complete_tree_algo_mean] \jacobidata;
\addlegendentry{\small\hyperlink{tree_link}{Ultrametric Tree Construction} and Jacobi}
\addlegendimage{empty legend}
\addlegendentry{\small Method using \hyperlink{tree_mult_link}{Ultrametric Multiplication}}

\addplot[draw=black, fill=chroma4blue, pattern color=chroma4blue!75!white,postaction={pattern=horizontal lines}, legend/.style={cells={anchor=west},cells={align=left}}] table[x=pos, y=normal_algo_mean] \jacobidata;
\addlegendentry{\small Jacobi Method using Standard Multiplication}

\end{semilogyaxis}
\begin{semilogyaxis}[y=3.54mm, ybar, log origin=infty, xtick=data, width=0.9\textwidth, height=0.4\textheight, scaled ticks=false, xticklabels={$2^3$, $2^5$, $2^7$, $2^9$, $2^{11}$, $2^{13}$, $2^{15}$}, xlabel=matrix size~$n$, major x tick style = {opacity=0}, minor x tick num = 1, minor tick length=0.5ex, legend style={at={(0.015,0.99)}, anchor=north west}, xtick pos=left, yticklabel pos=right, legend cell align={left}, bar width=5pt, legend style={cells={align=left}}, ymax=5000, ymin=0.000001]

\addplot[draw=black, fill=chroma4gray, pattern color=chroma4gray!75!white,postaction={pattern=dots}] table[x=pos, y=tree_gen_mean] \jacobidata;
\addlegendentry{\small\hyperlink{tree_link}{Ultrametric Tree Construction}}

\addplot[draw=black, fill=chroma4sand, pattern color=chroma4sand!75!white, postaction={pattern=north east lines}] table[x=pos, y=tree_algo_mean] \jacobidata;
\addlegendentry{\small Jacobi Method using \hyperlink{tree_mult_link}{Ultrametric Multiplication}}

\addplot[draw=black, fill=chroma4green, pattern color=chroma4green!75!white] table[x=pos, y=complete_tree_algo_mean] \jacobidata;
\addlegendentry{\small\hyperlink{tree_link}{Ultrametric Tree Construction} and Jacobi}
\addlegendimage{empty legend}
\addlegendentry{\small Method using \hyperlink{tree_mult_link}{Ultrametric Multiplication}}

\addplot[draw=black, fill=chroma4blue, pattern color=chroma4blue!75!white,postaction={pattern=horizontal lines}, legend/.style={cells={anchor=west},cells={align=left}}] table[x=pos, y=normal_algo_mean] \jacobidata;
\addlegendentry{\small Jacobi Method using Standard Multiplication}

\end{semilogyaxis}
\end{tikzpicture}
\caption{Computation times for solving linear systems. The results in this chart are averaged over~$10$ runs with random ultrametric matrices as input.}
\end{figure}
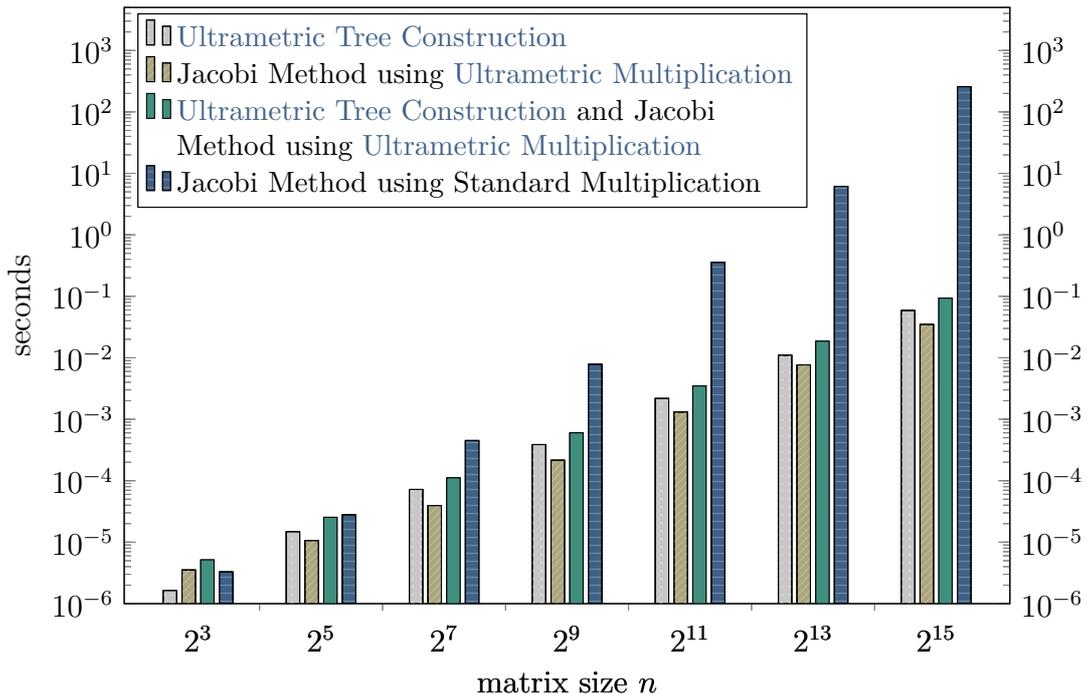\label{fig:multiple}

Figure~\ref{fig:multiple} shows empirical results on the performance of our methods when using them as a subroutine within the Jacobi method to solve a system of linear equations~$Ax=b$. The matrices on which our tests are based are constructed as described above with the only exception that we now require them to be strictly diagonal dominant. More precisely, we choose the diagonal elements~$a_{ii}$ for~$i\in\{1,\ldots,n\}$ uniformly at random from~$\{ d + 1, \ldots , d^2 \}$ where~$d = \sum_{j = 1}^n a_{i j}$, which guarantees a reasonable convergence rate of the Jacobi method.

As with the results described in Figure~\ref{fig:single}, the initial effort involved in the tree construction begins to pay off already for relatively small matrix sizes. For the scenario at hand, the breakpoint is reached at a size of about $n=2^6$, which is earlier than in the experiments illustrated in Figure~\ref{fig:single}. Also, compared to using naive matrix multiplication within an iterative scheme, the difference in performance becomes considerably larger. For example, for a system of size $n=2^{15}$, using our methods within the Jacobi method is about~2750 times faster than the standard version. This underlines the potential of the proposed methods for large scale computations.

\section*{Acknowledgments}
Our research was partially funded by the Deutsche Forschungsgemeinschaft (DFG, German Research Foundation) -- Project-ID 416228727 -- SFB 1410 and by the Wallenberg AI, Autonomous Systems and Software Program (WASP) funded by the Knut and Alice Wallenberg Foundation.

\bibliographystyle{plain}
\bibliography{bibliography}

\end{document}